\providecommand{\nopunct}{\spacefactor \@nopunct}
\def\@nopunctsfcode{1007}
\numberwithin{equation}{section}
\theoremstyle{plain}
\newtheorem{theorem}{Theorem}[section]
\newtheorem{lemma}[theorem]{Lemma}
\numberwithin{equation}{section}
\def \be{\begin{equs}}
\def \ee{\end{equs}}
\def \N{\mathrm{N}}
\def \i{\mathbf{i}}
\def \w{w}
\def \C{\rm{Cauchy(0, 1)}}
\definecolor{darkred}{rgb}{.7,0,0}
\definecolor{darkgreen}{rgb}{0,0.7,0}
\definecolor{darkblue}{rgb}{0,0,0.7}
\begin{document}

\begin{frontmatter}
\title{An unexpected encounter  \\ with Cauchy and L\'evy }
\runtitle{An unexpected encounter with Cauchy and L\'evy}

\begin{aug}
\author{\fnms{Natesh} \snm{S. Pillai}\thanksref{t1,m1}\ead[label=e2]{pillai@fas.harvard.edu}}
\and
\author{\fnms{Xiao-Li} \snm{Meng}\thanksref{t2,m1}\ead[label=e1]{meng@fas.harvard.edu}}
\thankstext{t1}{Associate Professor, Department of Statistics, Harvard University}
\thankstext{t2}{Professor, Department of Statistics, Harvard University}
\runauthor{Pillai and Meng}
\affiliation{
Department of Statistics, Harvard University\thanksmark{m1}}
\end{aug}

\begin{abstract}
The Cauchy distribution is usually presented as a mathematical curiosity, an exception to the Law of Large Numbers, or even as  an ``Evil" distribution in some introductory courses. It therefore  surprised us when  Drton and Xiao (2016) proved the following result for $m=2$ and conjectured it for  $m\ge 3$.  Let $X= (X_1,\dots, X_m)$ and $Y = (Y_1, \dots,Y_m)$ 
be i.i.d.  $\N(0,\Sigma)$, where $\Sigma=\{\sigma_{ij}\}\ge 0$ is an $m\times m$ and \textit{arbitrary} covariance matrix with $\sigma_{jj}>0$ for all $1\leq j\leq m$. Then
\begin{equation*}
Z = \sum_{j=1}^m \w_j {X_j \over Y_j} \ \sim \  \C,\;
\end{equation*}
as long as $\vec{\w}=(\w_1,\dots, 
\w_m) $ is independent of $(X, Y)$,   $\w_j\ge 0,  j=1,\dots, m$, and $\sum_{j=1}^m\w_j=1$. In this note, we present  an elementary proof of  this conjecture for any $m \geq 2$ by linking $Z$ to a geometric characterization of  Cauchy(0,1) given in Willams (1969). This general result  is essential to the large sample behavior of Wald tests in many applications such as factor models and contingency tables. It also leads to other unexpected  results such as 
\be
\sum_{i=1}^m\sum_{j=1}^m \frac{w_iw_j\sigma_{ij}}{X_iX_j}
\sim  {\text{L\'{e}vy}}(0, 1).
\ee
This generalizes the ``super Cauchy phenomenon" that the average  of $m$ i.i.d. standard L\'evy variables (i.e., inverse chi-squared variables with one degree of freedom) has the same distribution as that of a single standard L\'evy variable multiplied by $m$ (which is obtained by taking $w_j=1/m$ and $\Sigma$ to be the identity matrix).

\end{abstract}

\begin{keyword}[class=AMS]
\kwd[Primary ]{62E15}
\kwd{62H10}
\kwd[; secondary ]{62H15}
\end{keyword}
\begin{keyword}
\kwd{Cauchy distribution}
\kwd{L\'evy distribution}
\kwd{Wald test}
\kwd{Ratio of Gaussians}
\end{keyword}
\end{frontmatter}
\section{Cauchy Distribution: Evil or Angel?\nopunct} Many of us may  recall the surprise  or even a  mild shock we experienced the first time we encountered the Cauchy distribution.  \textit{What does it mean that it does not have a mean?}  Surely one can always take a sample average, and surely it should converge to something by the Law of Large Numbers (LLN). But then we learned that the LLN does not apply to the Cauchy distribution. Obviously there cannot be an upper bound on how things may vary  in general, and hence it is not difficult  to imagine a distribution with infinite  variance.   But the non-existence of the mean, which is not the same as the mean is infinite, is harder to envision intuitively.  Therefore, some introductory courses (e.g., at our institution) have given the Cauchy distribution the nickname ``Evil", because it has created a few excruciating moments (no pun intended), even for some of our best young minds when they tried  hard to understand the meaning of  not having a mean. 
 
Of course gain often comes with pain, because  soon we would learn something deeper. The non-existence of the mean for Cauchy is a reflection of the fact that the sample average of an i.i.d. Cauchy sample actually does converge, except it does not converge to a conventional mean, i.e., a deterministic number. Rather, it converges trivially to a Cauchy random variable, and more surprisingly, the limiting distribution  is the same as that for each term in the entire sequence,  as indexed by the sample size. In this sense the Cauchy distribution is as nice as an angel, because probabilistically  its sample average sequence never deviates from its starting point,   a dream case for anyone who studies probabilistic behavior of a random sequence. 

Through settling a conjecture set forth in \cite{DrtonXiao14}, we prove in this article that this nice property \textit{can} hold even when the i.i.d. assumption is violated (and the terms are not trivially identical).  Specifically, let  $\Sigma=\{\sigma_{ij}\} \ge 0$  and $\sigma_{jj}>0$ for all $j=1, \ldots, m,$ and let $X,Y$ be
independent variables distributed as $\N(0,\Sigma)$. We denote the row vectors as
$X= (X_1, \dots, X_m)$ and $Y = (Y_1, \dots,Y_m)$.
Let $\vec{\w}=(\w_1, \dots, \w_m)$ be  a random vector such  that \be \label{eqn:pi}
\vec{\w}\ \bot \ \{X, Y\};\quad  \sum_{j=1}^m \w_j = 1; \quad  {\rm and}\quad  \w_j\ge 0, \ j=1,\ldots,  m.
\ee
Define the random variable
\be \label{eqn:Z}
Z = \sum_{j=1}^m \w_j {X_j \over Y_j}\;.
\ee
 
In \cite{DrtonXiao14}, the $\w_j$'s were assumed to be fixed constants, but by a conditioning argument, it is trivial to generalize from deterministic $\vec\w$ to a random $\vec\w$ as long as it is independent of $(X, Y). $  Therefore, throughout this article we will present the more general random (but independent) $\vec{\w}$ version of the results  presented in  \cite{DrtonXiao14} and in the literature with fixed $\vec{\w}$, whenever appropriate.

When $\Sigma = \sigma^2 \mathrm{I}_{m\times m}$, it is well-known that $Z$ has the standard Cauchy distribution on $\mathbb{R}$ with pdf  $\pi^{-1} (1+ z^2)^{-1}$, denoted by Cauchy$(0, 1)$; Cauchy$(\mu, \sigma)$ then denotes the distribution of $\mu+\sigma Z$.  The fascination with this result is  evident from the number of different approaches proposed in the literature to prove it, such as by characteristic functions, convolutions,  multivariate change of variables, and most recently by  a trigonometric approach \cite{Cohen12}.

Nevertheless, for arbitrary $\Sigma$ (so that the terms $X_j/Y_j$, $j=1, \dots, m$ are no longer independent in general), few would expect that $Z$ might remain to be Cauchy(0,1).  However, through simulations Drton and Xiao \cite{DrtonXiao14} suspected that this was indeed the case, and via a rather complex and indirect argument involving the Residue theorem, they were able to prove it when  $m=2$ and some cases of perfect correlation when $m>2$.  They conjectured that the result should hold for $m >2$ for an arbitrary $\Sigma$,  but their argument does not seem to be easily generalizable to the $m > 2$ case, nor was it feasible  to invoke induction because of the dependence induced by $\Sigma$. \par
  
Seriously intrigued by  the findings and the conjecture in \cite{DrtonXiao14}, we worked  for a while trying to extend their complex analytic approach. By using copulas of Cauchy distributions and also the Residue theorem, we ultimately succeeded in finding a proof for all $m \geq 2$. However, we were not satisfied by our lengthy proof because it did not provide any geometric interpretation or statistical insight.  We therefore continued to search for a simpler and more inspiring  approach. Thanks to an elegant but less well-known geometric characterization of Cauchy(0,1) given in  \cite{pitman1967} and  in \cite{williams1969cauchy, Cohen12}, we  are able to provide an elementary and geometrically appealing proof of the following result, conjectured by Drton and Xiao in \cite{DrtonXiao14}.    
\begin{theorem} \label{Thm:main}
For any $\Sigma=\{\sigma_{ij}\}\ge 0$ such that $\sigma_{jj}>0$ for all $j=1,\ldots, m$  and  $\vec{\w}$ satisfying \eqref{eqn:pi}, the random variable $Z$ defined in \eqref{eqn:Z} is distributed as Cauchy(0, 1).
\end{theorem}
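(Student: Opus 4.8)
The plan is to turn $Z$ into a weighted average of cotangents of \emph{rigidly co-rotated} random angles, and then to recognise such an average as the boundary trace of a holomorphic map of the unit disk into a half-plane, so that the Cauchy law falls out of the way harmonic measure behaves under holomorphic maps --- which is exactly the content of the geometric characterisation of Cauchy$(0,1)$ in \cite{pitman1967, williams1969cauchy}. First, since $\vec{w}$ is independent of $(X,Y)$, conditioning on $\vec w$ reduces everything to the case of a fixed $\vec w$ in the simplex, as noted in the text. Write $\Sigma = BB^{\top}$ with $B$ a real $m\times k$ matrix and let $b_j$ be its $j$-th row, so that $X = B\xi$ and $Y = B\eta$ for independent standard Gaussian vectors $\xi,\eta\in\mathbb{R}^{k}$. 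Put $c_j = b_j\cdot\xi + \mathbf{i}\,b_j\cdot\eta = X_j + \mathbf{i}Y_j$; because $\sigma_{jj}>0$ we have $Y_j\neq 0$ a.s., so $\Psi_j := \arg c_j$ is well defined, $X_j/Y_j=\cot\Psi_j$, and $Z = \sum_{j}w_j\cot\Psi_j$. The decisive symmetry is that the law of $(\xi,\eta)$ is invariant under the planar rotation $(X_j,Y_j)\mapsto(X_j\cos\alpha-Y_j\sin\alpha,\,X_j\sin\alpha+Y_j\cos\alpha)$ applied \emph{simultaneously for every $j$} (equivalently, $\gamma:=\xi+\mathbf{i}\eta$ is a circularly symmetric complex Gaussian vector, so $e^{\mathbf{i}\alpha}\gamma\stackrel{d}{=}\gamma$ for each fixed $\alpha$). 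Hence $(\Psi_1+\alpha,\dots,\Psi_m+\alpha)\stackrel{d}{=}(\Psi_1,\dots,\Psi_m)$ for each fixed $\alpha$, and therefore also when $\alpha\sim\mathrm{Unif}[0,2\pi)$ is drawn independently of $(X,Y)$, giving
\begin{equation*}
Z\ \stackrel{d}{=}\ \sum_{j=1}^{m}w_j\cot(\Psi_j+\alpha),\qquad \alpha\sim\mathrm{Unif}[0,2\pi)\ \text{independent of}\ (\Psi_1,\dots,\Psi_m).
\end{equation*}

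Next I condition on $(\Psi_1,\dots,\Psi_m)=(\psi_1,\dots,\psi_m)$ and treat $\alpha$ as the only source of randomness. The bridge to the geometric characterisation is the elementary fact that $h(\zeta):=\mathbf{i}\,(\zeta^{2}+1)/(\zeta^{2}-1)$ is holomorphic on the open disk $\mathbb{D}$, maps $\mathbb{D}$ into the lower half-plane $\mathbb{H}^{-}$ (its only poles $\zeta=\pm 1$ lie on the unit circle), sends the centre to $h(0)=-\mathbf{i}$, and has real boundary values $h(e^{\mathbf{i}\theta})=\cot\theta$. Consequently
\begin{equation*}
F(z)\ :=\ \sum_{j=1}^{m}w_j\,h\!\bigl(e^{\mathbf{i}\psi_j}z\bigr)
\end{equation*}
is holomorphic on $\mathbb{D}$, and being a convex combination ($w_j\ge 0$, $\sum_j w_j=1$) of maps into the convex set $\mathbb{H}^{-}$, it maps $\mathbb{D}$ into $\mathbb{H}^{-}$; moreover $F(0)=-\mathbf{i}$ and $F(e^{\mathbf{i}\alpha})=\sum_{j}w_j\cot(\psi_j+\alpha)$, which is real for all but finitely many $\alpha$.

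It remains to read off the distribution of $F(e^{\mathbf{i}\alpha})$ for $\alpha\sim\mathrm{Unif}[0,2\pi)$: this is the push-forward under $F$ of harmonic measure on $\partial\mathbb{D}$ seen from $0$ (i.e.\ normalised arclength), hence harmonic measure on $\partial\mathbb{H}^{-}=\mathbb{R}$ seen from $F(0)=-\mathbf{i}$, whose density is the Poisson kernel $\tfrac{1}{\pi}(1+x^{2})^{-1}$; that is, $F(e^{\mathbf{i}\alpha})\sim$ Cauchy$(0,1)$. Concretely, for bounded continuous $\phi$ one takes its bounded harmonic extension $H\phi$ to $\mathbb{H}^{-}$, observes that $H\phi\circ F$ is bounded and harmonic on $\mathbb{D}$ with radial boundary trace $\phi(F(e^{\mathbf{i}\alpha}))$ for a.e.\ $\alpha$, and reads off $\frac{1}{2\pi}\int_{0}^{2\pi}\phi\bigl(F(e^{\mathbf{i}\alpha})\bigr)\,d\alpha=(H\phi\circ F)(0)=(H\phi)(-\mathbf{i})=\int_{\mathbb{R}}\phi(x)\,\tfrac{1}{\pi(1+x^{2})}\,dx$. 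Since this holds for a.e.\ $(\psi_1,\dots,\psi_m)$, integrating out $(\Psi_1,\dots,\Psi_m)$ yields $Z\sim$ Cauchy$(0,1)$.

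I expect the only delicate point to be this last step: $F$ is merely meromorphic across $\partial\mathbb{D}$, with finitely many boundary poles, so one must verify that these poles --- a Lebesgue-null set on the circle --- do not corrupt the identity above; the boundary/Poisson-representation argument sketched does this, and alternatively one can argue purely algebraically from the partial-fraction decomposition of the rational function $F$ (whose poles sit on the unit circle with the residues forced by $w_j\ge 0$). Everything else --- the reduction to a fixed $\vec w$, the complex-Gaussian rotation, and the properties of $h$ --- is routine; note that the hypotheses $w_j\ge 0$ and $\sum_j w_j=1$ enter precisely through the convexity of $\mathbb{H}^{-}$ and through $F(0)=-\mathbf{i}$, which is why the Cauchy parameters come out to be $(0,1)$ exactly. (One may also observe in passing that conditioning on $Y$ gives $Z\stackrel{d}{=}\tau_Y\,G$ with $G\sim\mathrm N(0,1)$ independent of $\tau_Y^{2}=\sum_{i,j}w_iw_j\sigma_{ij}/(Y_iY_j)$, so the theorem is equivalent to $\tau_Y^{2}\sim\mathrm{L\acute evy}(0,1)$, which is the companion result quoted in the abstract.)
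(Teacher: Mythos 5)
Your argument is correct, and while it rests on the same structural insight as the paper's proof --- that $Z$ can be rewritten as a convex combination of a fixed trigonometric function evaluated at angles that are rigidly co-rotated by a uniform angle independent of their mutual offsets --- it differs in both halves of the execution, and each difference buys something. First, you obtain the key uniformity-and-independence statement from the circular symmetry of the complex Gaussian vector $X+\mathbf{i}Y$ (i.e.\ $e^{\mathbf{i}\alpha}(X+\mathbf{i}Y)\stackrel{d}{=}X+\mathbf{i}Y$, because the joint law of $(X,Y)$ is preserved by $(\xi,\eta)\mapsto(\xi\cos\alpha-\eta\sin\alpha,\ \xi\sin\alpha+\eta\cos\alpha)$), whereas the paper computes the joint density of $(R,\Theta)$ explicitly and changes variables to $(\Theta_1,U_2,\dots,U_m)$. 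Your route needs no density, no Jacobian bookkeeping, and --- notably --- no invertibility of $\Sigma$, so it covers $\Sigma\ge 0$ in one stroke and makes the paper's closing continuous-mapping/limiting argument unnecessary. Second, where the paper invokes the Pitman--Williams result (Lemma \ref{thm:cauchywilres}) as a black box, you re-derive it (in cotangent form) for general $m$ via harmonic measure: $F(z)=\sum_j w_j h(e^{\mathbf{i}\psi_j}z)$ with $h(\zeta)=\mathbf{i}(\zeta^2+1)/(\zeta^2-1)$ is a convex combination of holomorphic maps of $\mathbb{D}$ into the convex lower half-plane, $F(0)=-\mathbf{i}$, and its a.e.-real boundary trace pushes normalized arclength forward to the Poisson kernel at $-\mathbf{i}$, i.e.\ Cauchy$(0,1)$; the finitely many boundary poles are indeed harmless, by dominated convergence in the mean-value identity as $r\uparrow 1$. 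This amounts to a geometric proof of the characterization for all $m$, extending the $m=2$ argument of \cite{Cohen12}. The only point worth flagging is your closing parenthetical: the implication from $\tau_Y^2\sim$ L\'evy$(0,1)$ to $Z\sim$ Cauchy$(0,1)$ is immediate, but the claimed \emph{equivalence} additionally uses uniqueness of the mixing law in a normal scale mixture --- true, but not needed for the theorem.
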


A theoretical speculation from this unexpected result is that for a set of random variables $\{\xi_1, \ldots, \xi_m\}$, the dependence among them can be overwhelmed by the heaviness of their marginal tails (e.g., take $\xi_j=X_j/Y_j$) in determining the stochastic behavior of their linear combinations. We invite the reader to ponder with us whether this is a pathological phenomenon or something profound.
\section{Applications  and prior work.}
As discussed in \cite{DrtonXiao14},  the $Z$ in \eqref{eqn:Z} naturally appears in many important applications. Following \cite{DrtonXiao14}, let 
$q \in \mathbb{R}[x_1,\dots,x_m]$ be a homogeneous $m$ variate polynomial with gradient $\nabla q$. Then by the $\delta$-method, the variance of $q(X)\equiv q(X_1,\ldots, X_m)$ can be approximated by $\nabla q(X) \Sigma \nabla^{\top} q(X)$, resulting in the Wald statistics  \  
\be\label{eq:wald}
W_{q,\Sigma} (X)= {q^{2}(X) \over \nabla q(X) \Sigma \nabla^{\top} q(X)} 
\equiv \left\{\nabla [\log q(X)] \Sigma \nabla^\top [\log q(X)]\right \}^{-1}.
\ee
 That is, quoting \cite{DrtonXiao14}, ``the random variable $W_{q,\Sigma}$ appears in the large sample behavior of Wald tests with $\Sigma$ as the asymptotic covariance matrix of an estimator and the polynomial $q$ appearing in a Taylor approximation to the function that defines the constraint to be tested.''  Thus there are many applications in which a distribution theory for $W_{q,\Sigma}$ is needed. These include contingency tables \cite{glonek1993,gaffke1999,gaffke2002}, graphical models \cite[Chapter 4]{drton2009lectures}, and the testing of so-called ``tetrad constraints" in factor analysis \cite{harman1960,DrtonXiao14}. See \cite{DrtonXiao14} for more applications and an extensive list of references. 

When $X \sim \mathrm{N}(0,\Sigma)$, by the arguments presented in Section 6 of \cite{DrtonXiao14}, Theorem \ref{Thm:main} implies the following result, also conjectured in \cite{DrtonXiao14},  on the quadratic forms for Gaussian random variables.

\begin{theorem} \label{thm:mot1}
Let $\Sigma=\{\sigma_{ij}\}\ge 0$ and $\sigma^2_{jj}>0$ for all $j=1,\dots, m$, and $X=(X_{1}, \dots, X_m)\sim \N(0, \Sigma)$. If $q(x_1,\dots, x_m) = x_1^{a_1} \dots x^{a_m}_m$ with non-negative real exponents $a_1, \dots, a_k$ such that $\sum_j a_j >0$,  then
\be
W_{q,\Sigma} (X)\sim {1 \over (\sum_{j=1}^ma_j )^2} \chi^2_1,
\ee
where $\chi^2_1$ denotes a standard chi-squared variable with $1$ degree of freedom.
\end{theorem}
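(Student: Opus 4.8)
The plan is to make the Wald statistic explicit, recognize its reciprocal as a rescaled conditional variance, and then identify the relevant law by combining Theorem~\ref{Thm:main} with the uniqueness of Laplace transforms. First I would compute the gradient: since $\log q(x)=\sum_{j=1}^m a_j\log|x_j|$ (a.s.\ well defined, away from the coordinate hyperplanes), we have $\nabla[\log q(x)]=(a_1/x_1,\dots,a_m/x_m)$, so the second form of \eqref{eq:wald}, with $s=\sum_{j=1}^m a_j>0$ and $w_j=a_j/s$ (so that $\vec{\w}=(w_1,\dots,w_m)$ satisfies \eqref{eqn:pi}), becomes
\be \label{eq:Wexplicit}
W_{q,\Sigma}(X) = \frac{1}{s^2}\, V^{-1}, \qquad V := \sum_{i=1}^m\sum_{j=1}^m \frac{w_iw_j\,\sigma_{ij}}{X_iX_j}\,.
\ee
Here $V=v\Sigma v^\top\ge 0$ with $v=(w_1/X_1,\dots,w_m/X_m)$, and $V<\infty$ almost surely since $\sigma_{jj}>0$ forces $X_j\ne 0$ a.s.

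Next I would introduce an independent copy $Y\sim\N(0,\Sigma)$ of $X$ and set $Z'=\sum_{j=1}^m w_j\,Y_j/X_j$. Because $(Y,X)\stackrel{d}{=}(X,Y)$, Theorem~\ref{Thm:main} gives $Z'\sim\C$. On the other hand, conditionally on $X$ the sum $Z'=\sum_j (w_j/X_j)\,Y_j$ is a linear form in the Gaussian vector $Y$, so $Z'\mid X\sim\N(0,V)$. Since $Z'\sim\C$ has characteristic function $e^{-|t|}$, conditioning on $X$ yields
\be \label{eq:LTmatch}
e^{-|t|} \,=\, \E\!\left[e^{\i t Z'}\right] \,=\, \E\!\left[e^{-t^2 V/2}\right], \qquad t\in\mathbb{R}.
\ee
Putting $u=t^2/2$ gives $\E[e^{-uV}]=e^{-\sqrt{2u}}$ for all $u\ge 0$, which is the Laplace transform of $\text{L\'evy}(0,1)$, equivalently of $1/\chi^2_1$ (recall that $\E[e^{-u/G^2}]=e^{-\sqrt{2u}}$ when $G\sim\N(0,1)$). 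Since a Laplace transform determines the law of a nonnegative variable, $V\sim 1/\chi^2_1$; as this law charges neither $0$ nor $\infty$, inverting yields $V^{-1}\sim\chi^2_1$, hence $W_{q,\Sigma}(X)\sim\chi^2_1/s^2$ by \eqref{eq:Wexplicit}. (Taking $a_j=w_j$, so $s=1$, this is precisely the L\'evy identity advertised in the abstract, $V\sim\text{L\'evy}(0,1)$.)

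I do not expect a genuine obstacle once Theorem~\ref{Thm:main} is granted; the argument amounts to bookkeeping around one content-bearing step, namely the observation that conditioning on the \emph{denominator} vector $X$ converts the Cauchy variable $Z'$ of Theorem~\ref{Thm:main} into a centered normal scale mixture whose mixing variable is exactly $s^2\,W_{q,\Sigma}(X)^{-1}$. After that one only needs the classical fact that such a mixture equals $\C$ precisely when the mixing law is $\text{L\'evy}(0,1)$, which the one-line Laplace-transform computation above supplies. The remaining items (the a.s.\ validity of $\nabla[\log q(x)]=(a_1/x_1,\dots,a_m/x_m)$ and the a.s.\ finiteness of $V$) are immediate from $\sigma_{jj}>0$.
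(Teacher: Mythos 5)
Your proof is correct and matches the paper's approach: the paper gives no self-contained argument for this theorem, deferring to ``the arguments presented in Section 6'' of Drton and Xiao, and the derivation being referenced is exactly yours---condition on the denominator vector $X$ so that the Cauchy variable $\sum_j w_j Y_j/X_j$ supplied by Theorem~\ref{Thm:main} becomes a centered normal scale mixture with mixing variable $V$, then identify $V\sim\chi^{-2}_1$ by uniqueness of the Laplace transform and invert. (One cosmetic slip: in your closing paragraph the mixing variable should read $(s^2\,W_{q,\Sigma}(X))^{-1}$ rather than $s^2\,W_{q,\Sigma}(X)^{-1}$; your displayed equations have it right, and your identification of $V$ is precisely the paper's equivalent reformulation in Theorem~\ref{thm:mot2}.)
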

An obvious  surprising aspect of Theorem \ref{thm:mot1} is that the exact distribution of $W_{q,\Sigma}$ is free of $\Sigma$. A consequential but somewhat hidden surprise is  revealed by expressing Theorem \ref{thm:mot1} in the following equivalent  form.    
 \begin{theorem} \label{thm:mot2} Let $\Sigma$ be the same as in Theorem~\ref{thm:mot1}. For any $(\w_1, \dots, \w_m)$ satisfying \eqref{eqn:pi}, and  $X \sim \N(0,\Sigma)$, we have
\be\label{eq:levy}
\Big({\w_1 \over X_1}, \dots, {\w_m \over X_m}\Big) \Sigma
\Big({\w_1 \over X_1},  \dots, {\w_m \over X_m}\Big)^\top \sim  \chi^{-2}_1
\ee
where $\chi^{-2}_1$ denotes an inverse chi-squared variable with $1$ degree of freedom.
\end{theorem}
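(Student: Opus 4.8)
The plan is to prove Theorem~\ref{thm:mot2} by realizing the quadratic form in \eqref{eq:levy} as a \emph{conditional variance} of the Cauchy variable $Z$ supplied by Theorem~\ref{Thm:main}; the one conceptual step is to bring in a fresh Gaussian copy rather than work with $X$ alone. First I would introduce $Y\sim\N(0,\Sigma)$ independent of $X$ and of $\vec\w$, and form $Z=\sum_{j=1}^{m}\w_j X_j/Y_j$ exactly as in \eqref{eqn:Z}. Conditionally on $(Y,\vec\w)$ the sum $Z=\sum_j(\w_j/Y_j)X_j$ is a fixed linear functional of $X\sim\N(0,\Sigma)$, so
\[
Z\mid(Y,\vec\w)\ \sim\ \N(0,S),\qquad S:=\Big(\tfrac{\w_1}{Y_1},\dots,\tfrac{\w_m}{Y_m}\Big)\Sigma\Big(\tfrac{\w_1}{Y_1},\dots,\tfrac{\w_m}{Y_m}\Big)^{\!\top},
\]
and $S<\infty$ almost surely since $\mathbb{P}(Y_j=0)=0$. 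Because $X,Y$ are i.i.d.\ and both independent of $\vec\w$, we have $(Y,\vec\w)\stackrel{d}{=}(X,\vec\w)$, so the law of $S$ coincides with the law of the quadratic form on the left of \eqref{eq:levy}; it therefore suffices to identify the distribution of $S$.

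For that, note that the conditional characteristic function of $Z$ is $\mathbb{E}\big[e^{\i tZ}\mid Y,\vec\w\big]=e^{-t^{2}S/2}$, so taking expectations and invoking Theorem~\ref{Thm:main} (whence $Z\sim\mathrm{Cauchy}(0,1)$, with characteristic function $e^{-|t|}$) gives
\[
\mathbb{E}\big[e^{-t^{2}S/2}\big]=e^{-|t|}\qquad\text{for every }t\in\mathbb{R}.
\]
Writing $s=t^{2}/2\ge 0$, this says that the Laplace transform of the nonnegative variable $S$ is $\mathbb{E}[e^{-sS}]=e^{-\sqrt{2s}}$ on $[0,\infty)$. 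A one-line computation --- integrate $e^{-s/N^{2}}$ against the $\N(0,1)$ density, or simply recall the L\'evy$(0,1)$ transform --- shows that $e^{-\sqrt{2s}}$ is exactly the Laplace transform of $\chi^{-2}_1$, so by uniqueness of Laplace transforms $S\sim\chi^{-2}_1$. This is \eqref{eq:levy}, proving Theorem~\ref{thm:mot2}.

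Theorem~\ref{thm:mot1} is then just an algebraic repackaging: for $q(x)=x_1^{a_1}\cdots x_m^{a_m}$ one has $\nabla[\log q(X)]=(a_1/X_1,\dots,a_m/X_m)=A\,(\w_1/X_1,\dots,\w_m/X_m)$ with $A=\sum_j a_j>0$ and $\w_j=a_j/A$, so by \eqref{eq:wald}, $W_{q,\Sigma}(X)=A^{-2}Q^{-1}$ where $Q$ is precisely the quadratic form in \eqref{eq:levy}; hence $Q\sim\chi^{-2}_1$ and $W_{q,\Sigma}(X)\sim A^{-2}\chi^2_1$ are the same assertion. I do not foresee a genuine obstacle: all of the probabilistic substance lives in Theorem~\ref{Thm:main}, and what remains is the conditional-Gaussian identity together with recognition of a standard Laplace transform. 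The only point worth a careful sentence is that $\Sigma$ may be singular, so that a priori $S$ could vanish on an event of positive probability; but since $\chi^{-2}_1$ puts all of its mass on $(0,\infty)$, the Laplace-transform identification forces $S>0$ almost surely, and no separate nondegeneracy argument is needed.
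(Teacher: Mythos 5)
Your proof is correct and follows essentially the route the paper intends: the paper derives Theorem~\ref{thm:mot2} from Theorem~\ref{Thm:main} by citing the conditioning argument of Section~6 of Drton and Xiao, which is precisely your observation that $Z\mid(Y,\vec{\w})\sim \N(0,S)$ with $S$ the quadratic form in \eqref{eq:levy}, so that the Cauchy law of $Z$ forces $\E[e^{-sS}]=e^{-\sqrt{2s}}$ and hence $S\sim\chi^{-2}_1$. The only differences are cosmetic --- the paper states Theorem~\ref{thm:mot1} first and presents Theorem~\ref{thm:mot2} as its algebraic re-expression, whereas you prove \eqref{eq:levy} directly and then repackage it as Theorem~\ref{thm:mot1} --- and your version has the merit of being self-contained where the paper only supplies a citation.
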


These two results  are equivalent when we observe that for $q(x_1,\dots, x_m) = x_1^{a_1}\dots x_m^{a_m}$,  $\nabla \log q= ({a_1 \over x_1}, \dots, {a_m\over x_m})$. Theorem \ref{thm:mot2} therefore is merely a re-expression of Theorem \ref{thm:mot1} using the rightmost expression of (\ref{eq:wald}), and by letting $w_j=a_j/\sum_k a_k, j=1,\dots, m$. (The generalization to random but independent $w_j$'s follows the  conditioning argument discussed previously.)

When $\Sigma = \mathrm{I}_{m \times m}$, Theorem~\ref{thm:mot2} recovers the not-so-well-known ``super Cauchy phenomenon", i.e., the average of $m$ i.i.d. $\chi^{-2}_1$ is distributed exactly as $m$ times a single $\chi^{-2}_1$, by taking $w_j=m^{-1}$ for all $j$'s in Theorem~\ref{thm:mot2}. The $\chi^{-2}_1$ distribution is also known as the standard L\'evy distribution (with location parameter equal to $0$ and scale parameter $1$; see \cite{applebaum}, p. 33). This result can easily be verified  via the characteristic function of the $\chi^{-2}_1$ distribution, 
 \be
 \phi(t) = e^{-\sqrt{-2\i t}},
 \ee because $\phi^m(t/m)=\phi(mt)$.  We call  this  ``super Cauchy phenomenon" because it says that for an i.i.d. L\'evy sample of size $m$, their average is $m$ times more variable than any one of them, exceeding the case of Cauchy where the average has the same variability as a single variable.   That is, if we denote the $\alpha$-percentile of the average of $m$ i.i.d. samples by $p^{(m)}_\alpha$, then for the Cauchy sample we have $p^{(m)}_\alpha=p_\alpha^{(1)}$, but for the L\'evy sample, we have $p^{(m)}_\alpha=m p_\alpha^{(1)}$.

Clearly the characteristic function approach does not apply when $\Sigma$ is not diagonal, but nevertheless Theorem  \ref{thm:mot2} says that the above distributional result generalizes when $\Sigma$ goes beyond the diagonal case.  At the first glance, result (\ref{eq:levy}) might  seem to be a wishful  thinking by a novice to probability or algebra,  who (mistakenly) treats  $X^{-1}\Sigma X^{-\top}$, where $X^{-1}\equiv\{X_1^{-1}, \ldots, X_m^{-1}\}$, as $(X^{}\Sigma^{-1} X^{\top})^{-1}$, which would then permit  him to use the usual standardization trick by letting $Z=X\Sigma^{-1/2} \sim \N(0, I)$.  However, this would have led him to guess that the left hand side of (\ref{eq:levy}), when $w_j=m^{-1}, j=1,\dots, m$,  distributes as $m^{-2}(\sum_j Z_j^2)^{-1}$, which would then be $m^{-2}\chi_m^{-2}$, not $\chi^{-2}_1$.   

It is instructive to express the left hand side of (\ref{eq:levy}) as the average of $m^2$ terms, when we take $w_j=m^{-1}$:
\be\label{eq:levy1}
\frac{1}{m^2} \sum_{i=1}^m\sum_{j=1}^m \frac{\sigma_{ij}}{X_iX_j}
\sim  \chi^{-2}_1.
\ee
This is a rather remarkable result because  the left-hand side can  only be made invariant (algebraically) to the variances $\sigma_{jj}$ by expressing $X_j=\sqrt{\sigma_{jj}}X_j'$ with variance of $X_j'$ equal to $1$, but not to the correlations  $\rho_{ij}=\sigma_{ij}/\sqrt{\sigma_{ii}\sigma_{jj}}$. Yet, (\ref{eq:levy}) says that it is actually a pivotal quantity for $\{\rho_{ij}\}$.

There are also some works on multivariate Cauchy densities that are relevant to our problem.  For instance, in \cite{marsaglia65} and \cite{hinkley1969}, the authors studied the distribution theory for the ratio of two Gaussian random variables. Ferguson \cite{ferg1962} derived a general result for the characteristic function of a multivariate Cauchy distribution. In \cite{jam2008}, the authors studied a generalization of the bivariate Cauchy distribution. McCullagh \cite{mccullagh1992} showed that it is natural to parametrize the family of Cauchy $(\mu, \sigma)$ distributions in the complex plane, as this location-scale family is closed under M{\"o}bius transformations. Finally, we remark that Drton and Xiao \cite{DrtonXiao14} did not prove the $m=2$ case of Theorem \ref{Thm:main} directly; instead, they first proved of a special case of Theorem \ref{thm:mot1} (with $q(x_1,x_2) = x_1^{a_1}x_2^{a_2}$), generalizing a similar result of Glonek \cite{glonek1993}. Drton and Xiao then obtained the conclusion of Theorem \ref{Thm:main} for $m=2$ as a corollary using  change of variables.
\section{Proof.}
The key idea of our proof relies on the following result from \cite{pitman1967},
as reformulated in \cite{williams1969cauchy}. The proof of this lemma as given in \cite{pitman1967} is short, and it also relies on the Residue theorem for contour integration. Very recently, the author of \cite{Cohen12} gave a geometric proof  for the case of $m = 2$.
\begin{lemma} \label{thm:cauchywilres}
Let $\Theta_1 \sim \mathrm{Unif}(-\pi,\pi]$,  and $\{w_1,\ldots, w_m\}$ be independent of $\Theta_1$, where $w_j \ge 0$ and $\sum_j w_j=1.$ \ Then for  any $\{u_1, \ldots, u_{m}\}$, where $u_j \in \mathbb{R}$,
\be
\sum_{j=1}^m \w_j\tan(\Theta_1 + u_j) \sim \C.
\ee
\end{lemma}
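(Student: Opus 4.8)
The plan is to compute the characteristic function of $V:=\sum_{j=1}^m w_j\tan(\Theta_1+u_j)$ and show it equals $e^{-|\lambda|}$, the characteristic function of Cauchy$(0,1)$. Since $\{w_1,\dots,w_m\}$ is independent of $\Theta_1$, conditioning on the weights reduces the claim to deterministic $w_j\ge 0$ with $\sum_j w_j=1$ (the conditional law of $V$ will come out free of the $w_j$'s), so fix these. The device is to linearize the tangent through a M\"obius representation. Set $b_j:=e^{2\i u_j}$ and $z:=e^{2\i\Theta_1}$, which is uniformly distributed on the unit circle $\{|z|=1\}$; the half-angle identity gives
\[
\tan(\Theta_1+u_j)\;=\;-\i\,\frac{b_j z-1}{b_j z+1}\;=\;-\i+\frac{2\i}{b_j z+1},
\]
so that $V=2\i\,S(z)-\i$, where $S(z):=\sum_{j=1}^m \dfrac{w_j}{b_j z+1}$ is a rational function whose only poles, the points $z_j=-1/b_j$, lie on the unit circle.

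Next I would record three elementary facts about $S$. First, $S$ is holomorphic on the open disk $\{|z|<1\}$, with $S(0)=\sum_j w_j=1$. Second, for $|z|<1$ the point $b_j z+1$ lies in the disk $\{|\zeta-1|<1\}$, whose image under $\zeta\mapsto 1/\zeta$ is the half-plane $H:=\{\operatorname{Re}>\tfrac12\}$; since $H$ is convex and $\sum_j w_j=1$, the average $S(z)$ again lies in $H$, i.e. $\operatorname{Re} S(z)>\tfrac12$, whenever $|z|<1$. Third, for $|z|=1$ off the poles one has $b_j z=e^{\i\phi}$ with $\tfrac{1}{e^{\i\phi}+1}=\tfrac12-\tfrac{\i}{2}\tan(\phi/2)$, so $\operatorname{Re} S(z)=\tfrac12$ there; in particular $V$ is automatically real.

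The computation is then one line. For $\lambda>0$, the second fact shows that $z\mapsto e^{-2\lambda S(z)}$ is holomorphic and bounded (by $e^{-\lambda}$) on the open disk, so the mean value property --- Cauchy's formula at the center on $\{|z|=r\}$, passed to $r\uparrow 1$ by dominated convergence, the poles of $S$ being a null set on the circle --- yields $\mathbb{E}\,e^{-2\lambda S(z)}=e^{-2\lambda S(0)}=e^{-2\lambda}$. Hence $\mathbb{E}\,e^{\i\lambda V}=\mathbb{E}\,e^{\i\lambda(2\i S(z)-\i)}=e^{\lambda}\,\mathbb{E}\,e^{-2\lambda S(z)}=e^{-\lambda}$, and since $V$ is real the relation $\mathbb{E}\,e^{\i\lambda V}=\overline{\mathbb{E}\,e^{-\i\lambda V}}$ extends this to $\lambda<0$. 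Thus $\mathbb{E}\,e^{\i\lambda V}=e^{-|\lambda|}$ for every $\lambda\in\mathbb{R}$, the characteristic function of Cauchy$(0,1)$, and the lemma follows.

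All the real content sits in the mapping facts for $S$ --- essentially that reciprocating the disk $\{|\zeta-1|<1\}$ produces the half-plane $H$, together with the convexity of $H$ --- and the only step I expect to require care is the boundary passage $r\uparrow 1$ past the circle-poles of $S$, which the uniform bound $|e^{-2\lambda S}|\le e^{-\lambda}$ disposes of. These facts also carry the geometry: $S$ transports the uniform (harmonic) measure on $\{|z|=1\}$ to the harmonic measure of $H$ viewed from its interior point $S(0)=1$, and the Poisson/harmonic measure of a half-plane is precisely a Cauchy law on its bounding line; rerunning the same one-line computation with the bounded harmonic functions $\operatorname{Re}\tfrac{1}{\zeta-\xi}$ in place of $e^{-2\lambda\zeta}$ makes this identification explicit and reproduces the Pitman--Williams picture.
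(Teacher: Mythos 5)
Your proof is correct, and it is worth noting that the paper itself does not prove this lemma at all: it is quoted from Pitman--Williams (1967) and Williams (1969), with the remark that the original short proof ``relies on the Residue theorem for contour integration'' and that Cohen's geometric proof covers only $m=2$. So you are supplying a self-contained argument where the paper defers to the literature. Your route is complex-analytic like Pitman--Williams's, but structurally cleaner and arguably more elementary: the substitution $z=e^{2\i\Theta_1}$, the identity $\tan(\Theta_1+u_j)=-\i+\tfrac{2\i}{b_jz+1}$, and the observation that $\zeta\mapsto 1/\zeta$ sends $\{|\zeta-1|<1\}$ onto $\{\operatorname{Re}>\tfrac12\}$ reduce everything to the mean value property of the bounded holomorphic function $e^{-2\lambda S(z)}$ at $z=0$, with no residue calculus and no case analysis in $m$. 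The key checks all hold: $z$ is indeed uniform on the circle (the doubling map preserves uniform measure), $S$ is holomorphic in the open disk with $S(0)=\sum_j w_j=1$, the convexity of the half-plane gives the uniform bound $|e^{-2\lambda S}|\le e^{-\lambda}$ for $\lambda>0$, the boundary values $\operatorname{Re}S=\tfrac12$ confirm $V$ is real, and dominated convergence past the finitely many boundary poles legitimizes $r\uparrow1$; conjugate symmetry then handles $\lambda<0$. Your closing remark that $S$ pushes uniform measure on the circle to harmonic measure of the half-plane $\{\operatorname{Re}>\tfrac12\}$ seen from $S(0)=1$, whose boundary trace is a Cauchy law, recovers exactly the geometric characterization the paper leans on, so your argument both proves the cited lemma and explains it.
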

Intuitively, if $\Theta_1 \sim\mathrm{Unif}(-\pi, \pi]$, then for any constant $u_j$,  $(\Theta_1+u_j) \mod(2\pi)  \sim\mathrm{Unif}(-\pi, \pi]$, and hence $\tan(\Theta_1 + u_j) \sim \C$.  The significance of this 
Lemma is that any convex combination of these \textit{dependent} $\C$ variables is still distributed as $\C$.  As we shall see below, Theorem \ref{Thm:main} is a direct consequence of this remarkable result.
We first prove Theorem \ref{Thm:main} when $\Sigma$ is strictly positive definite, i.e., $\Sigma>0$, and then invoke  a limiting argument  to cover the cases with $\Sigma\ge 0$. 
\begin{proof}[Proof of Theorem \ref{Thm:main}]
 When $\Sigma>0$, we  write $\Sigma^{-1}=\{b_{ij}\}. $
The joint density of $(X,Y)$ then can be written as\be 
f_{X,Y}(x,y)=K \exp\Big \{-{1 \over 2}\Big(\sum_{j=1}^m b_{jj} (x^2_j+y_j^2) + 2\sum_{j \neq k} b_{jk} (x_jx_k+y_j y_k)\Big)\Big \},
\ee
where $x,y \in \mathbb{R}^m$ and $K$ is a constant that depends only on $m$ and $\Sigma$. 
Let us make the transformation $(X_j,Y_{j}) = (R_{j}\sin(\Theta_j),R_j\cos(\Theta_j))$, where
$0 \leq R_j < \infty$ and $\Theta_j \in (-\pi, \pi]$. Then the joint density of $R=\{R_1,\ldots, R_m\}$ and $\Theta=\{\Theta_1, \ldots,\Theta_m\}$  is
\be 
\hspace{-0.75cm} f_{R, \Theta}(r, \theta) \propto \exp\Big \{-{1 \over 2}\Big(\sum_{j=1}^m b_{jj} r_j^2 + 2\sum_{j \neq k} b_{jk} r_jr_k \cos(\theta_j - \theta_k)\Big)\Big \} \prod_{j=1}^m r_j, \label{eqn:J1}
\ee
for $r \in [0,\infty)^m$ and $\theta \in (-\pi, \pi]^m$.
 The term $\prod_{j=1}^m r_j$ in Equation \eqref{eqn:J1} is the Jacobian of the $(X, Y) \rightarrow (R, \Theta)$ transformation. \par
 \begin{figure}
 \centering
 \begin{tikzpicture}[scale=0.75]
 \draw [gray!60,->] (-5,0) -- (5,0);
 \draw [gray!60,->](0,-4) -- (0,5);
   \draw [gray!30](-pi,pi/2) -- (pi,pi/2);
   \draw[gray!30](-pi/2,pi) -- (-pi/2,-pi);
 \draw [dashed]      (-pi,-pi)--(pi,pi);
 \draw       (pi,pi/2)--(-pi/2,-pi);
 \draw       (-pi/2,pi)--(-pi,pi/2);
 \draw[fill=black] (0,0) circle (0.1cm) ;
 \draw[fill=black] (pi,0) circle (0.1cm) ;
 \draw[fill=black] (-pi,0) circle (0.1cm) ;
 \draw[fill=black] (0,pi) circle (0.1cm) ;
 \draw[fill=black] (0,-pi) circle (0.1cm);
  \draw[fill=black] (-pi/2,0) circle (0.1cm);
    \draw[fill=black] (0,pi/2) circle (0.1cm);
  \draw (5.0, 0.5) node[anchor=east] {$\Theta_j$};
   \draw (0, 5) node[anchor=west] {$U_j$};
    \draw (1.0, -0.5) node[anchor=east] {$(0,0)$};
  \draw (4.7, -0.5) node[anchor=east] {$(\pi,0)$};
 \draw (-3.2, -0.5) node[anchor=east] {$(-\pi,0)$};
  \draw (-0.6, -0.5) node[anchor=east] {$(-\frac{\pi}{2},0)$};
  \draw (0, 3.6) node[anchor=west] {$(0,\pi)$};
   \draw (0.8, 1.6) node[anchor=south] {$(0,\frac{\pi}{2})$};
 \draw (0,-3.6) node[anchor=west] {$(0,-\pi)$};
 \draw (4.0,3.8) node[anchor = north] {$\Theta_1 = 0$};
 \draw (4.2,2.2) node[anchor = north] {$\Theta_1 = \frac{\pi}{2}$};
 \draw (-2.0,4.0) node[anchor = north] {$\Theta_1 = \frac{\pi}{2}$};
 \end{tikzpicture}
 \caption{For every value of $\Theta_1 \neq 0$, the map $\Theta_j \mapsto U_j$ $(j\ge 2)$ is a disjoint union of two lines, and is one-to-one. The figure shows the graph of $(\Theta_j,U_j)$ when $\Theta_1 = \frac{\pi}{2}$ (plotted as two solid lines). When $\Theta_1 = 0$, $U_j = \Theta_j$ (plotted in dashed line). 
 } \label{fig:Fonetoone}
 \end{figure}
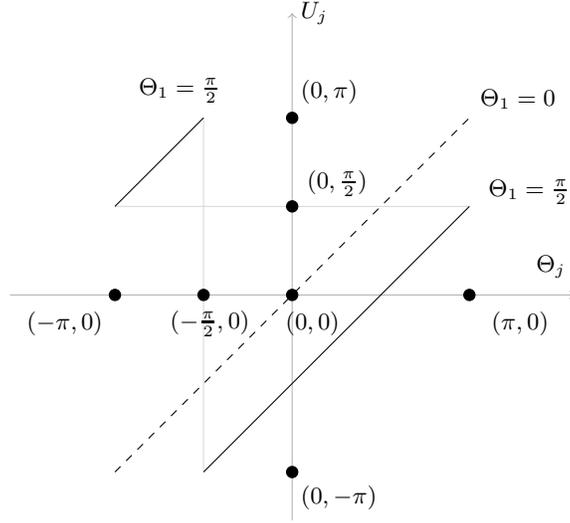
We then  make a further  transformation, $\mathcal{F}:(-\pi,\pi]^m \mapsto (-\pi,\pi]^m$, with 
$\mathcal{F}(\Theta_1, \dots, \Theta_m)=  (\Theta_1, U_2, \dots, U_m)$, where
\be \label{eq:mod} 
U_j= (\Theta_j - \Theta_1)+ 2\pi [\mathbf{1}_{\{\Theta_j - \Theta_1 \leq -\pi\}} -\mathbf{1}_{\{\Theta_j - \Theta_1 >\pi\}  }], \quad 2\le  j \leq m.
\ee 
This is a form of $U_j=(\Theta_j - \Theta_1)\mod(2\pi)$, but with the assurance  that the support of $U_j$ is $(-\pi, \pi]$ regardless of the value of $\Theta_1$, and that $U_j-U_k=(\Theta_j-\Theta_k)\mod(2\pi)$, and $\Theta_j=(\Theta_1+U_j)\mod(2\pi)$.
The map $\mathcal{F}$ is one-to-one as shown in Figure \ref{fig:Fonetoone}.
 Furthermore, the points where the map $\mathcal{F}$ is \emph{not} differentiable is contained in the set
 \be
\big \{\Theta \in (-\pi,\pi]^m: \Theta_j - \Theta_1 \in \{-\pi,\pi\} \, \mathrm{for \,\,some\,\,} j \geq 2 \big\},
 \ee 
 as can be seen from Figure \ref{fig:Fonetoone}. Clearly this set has Lebesgue measure zero. Outside this set, we have ${\partial U_j \over \partial \Theta_j} = 1$. Thus the Jacobian of the map $\mathcal{F}$  is 1 for all $\Theta \in (-\pi,\pi]^m$ except for the above measure zero set. \par
Set $U_1 \equiv 0$ and denote $U = (U_1,U_2, \dots, U_m)$.
Since $\cos(W_1) = \cos(W_2)$ for any $W_1=W_2\mod(2\pi)$,
we can write the joint density in the new coordinates as 
\be 
 f_{R, \Theta_1, U}(r, \theta_1, u) \propto 
 \exp\Big \{-{1 \over 2}\Big(\sum_{j=1}^m b_{jj} r_j^2 + 2\sum_{j \neq k} b_{jk}  r_j r_k \cos(u_k - u_j)\Big)\Big \} \prod_{j=1}^m r_j 
\ee
with $r \in [0,\infty)^m, \theta_1 \in (-\pi, \pi], u_1 = 0$ and $u_2, \dots, u_m \in (-\pi, \pi]$.
The only observations we need from the above line are: (i) $\Theta_1$ is independent of $U$ and  (ii) $\Theta_1 \sim \mathrm{Unif}(-\pi,\pi]$. 
But   $Z$ in \eqref{eqn:Z} can be written as 
\be
Z = \sum_{j=1}^m \w_j {X_j \over Y_j}= \sum_{j=1}^m \w_j \tan(\Theta_j ) 
= \sum_{j=1}^m \w_j \tan(\Theta_1 + U_j), 
\ee 
because $\tan(W_1) = \tan(W_2)$ for any $W_1=W_2\mod(2\pi)$. Since $U$ is independent of $\Theta_1$, conditional on  $U$, Lemma  \ref{thm:cauchywilres} yields that $Z\sim \C$.  It follows immediately that $Z$ is also marginally distributed as $\C$,   completing  the proof when $\Sigma>0$. 

When we relax the assumption  $\Sigma>0$ to $\Sigma\ge 0$,  $\Sigma^{-1}$ may not exist.  However,  for any $n \in \mathbb{N}$, $\Sigma^{(n)} = \Sigma +  n^{-1}\mathrm{I}_{m \times m} >0$. Let $X^{(n)} = (X^{(n)}_1,\dots, X^{(n)}_m)$ and $Y^{(n)} = (Y^{(n)}_1,\dots, Y^{(n)}_m)$ be i.i.d.  from  
$\mathrm{N}(0,\Sigma^{(n)})$. As $n \rightarrow \infty$, we have
$
 (X^{(n)}, Y^{(n)}) \rightsquigarrow (X,Y),
$
where `$\rightsquigarrow$' indicates convergence in distribution. Next the mapping $\zeta: \mathbb{R}^{2m} \mapsto \mathbb{R}$ defined by
\be 
\zeta(x,y) = \sum_{j=1}^m \w_j {x_j \over y_j}
\ee
is continuous, except when $y \in B$ with 
\be
B = \{ (y_1,\dots, y_m) \in \mathbb{R}^m: \min_{1 \leq j \leq m} |y_j| = 0 \}.
\ee
Now the result follows from the continuous mapping theorem. Indeed, since $(X^{(n)}, Y^{(n)}) \rightsquigarrow (X,Y)$, the continuous mapping theorem yields that
\be \label{eqn:weakconv}
Z^{(n)}\ = \zeta(X^{(n)}, Y^{(n)}) \rightsquigarrow  \zeta(X, Y) = Z
\ee
as $n \rightarrow \infty$, provided the points of discontinuity of $\zeta$ belong to a zero-measure set.
However, since $Y_j \sim \mathrm{N}(0,\sigma_{jj})$ where $\sigma_{jj}>0$ by our assumption, we have 
\be 
\mathbb{P}(Y \in B) = \mathbb{P}(\min_{1\leq j \leq m} |Y_j| = 0 ) \leq \sum_{j=1}^m \mathbb{P}(|Y_j| = 0) =0,
\ee
verifying \eqref{eqn:weakconv}.
By our previous argument, we know that $Z^{(n)}\sim \C$ for all $n \in \mathbb{N}$, and hence (\ref{eqn:weakconv}) implies that  $Z\sim \C.$  \end{proof}
\section*{Acknowledgements.}
We  thank Alexei Borodin, Joseph Blitzstein, Carlo Beenakker, Mathias Drton, Sukhada Fadnavis, Steven Finch, Luke Miratrix, Carl Morris, Christian Robert, Aaron Smith, Alex Volfovsky, Nanny Wermuth and Robert Wolpert for helpful comments and encouragement. We also thank the Editor, Associate Editor and the Referees for catching an oversight in our proof in the initial submission, and  NSF and ONR for partial financial support.\bibliographystyle{imsart-nameyear}
\bibliography{cauchy}
\end{document}